\newcommand{\rr}{\mathbb{R}}
\newcommand{\zz}{\mathbb Z}
\newcommand{\bn}{\mathbb N}
\newcommand{\bS}{\mathbb S}
\newcommand{\cM}{\mathcal M}
\newcommand{\cP}{\mathcal P}
\newcommand{\gS}{\Sigma}
\newcommand{\ssm}{\smallsetminus}
\newcommand{\inv}{^{-1}}
\DeclareMathOperator{\Mod}{Mod}
\DeclareMathOperator{\Hom}{Hom}
\DeclareMathOperator{\Homeo}{Homeo}
\DeclareMathOperator{\Aut}{Aut}
\newtheorem*{mainthm}{Main Theorem}
\newtheorem{Theorem}{Theorem}[section]
\newtheorem{Theorem*}{Theorem}
\newtheorem{lemma}[Theorem]{Lemma}
\newtheorem{Cor*}[Theorem*]{Corollary}
\newtheorem*{Remark}{Remark} 
\theoremstyle{definition}
\newtheorem{definition}[Theorem]{Definition}
\numberwithin{equation}{section}
\definecolor{torusgreen}{RGB}{1,199,34}
\definecolor{torusblue}{RGB}{0,48,231}
\definecolor{torusyellow}{RGB}{243,170,0}
\definecolor{toruspink}{RGB}{214,0,226}
\definecolor{torusred}{RGB}{221,5,0}
\title{Thurston norms of tunnel number-one manifolds}
\author{Natalia Pacheco-Tallaj}
\address{Department of Mathematics \\ Harvard College \\Cambridge, MA 02138\\USA}
\email{pachecotallaj@college.harvard.edu}
\author{Kevin Schreve}
\address{Department of Mathematics\\
University of Michigan\\   Ann Arbor, MI 48109\\USA}
\email{schreve@umich.edu}
\author{Nicholas G.~Vlamis}
\address{Department of Mathematics\\
University of Michigan\\   Ann Arbor, MI 48109\\USA}
\email{vlamis@umich.edu}
\begin{abstract} 
The Thurston norm of a $3$-manifold measures the complexity of surfaces representing two-dimensional homology classes. 
We study the possible unit balls of Thurston norms of $3$-manifolds $M$ with $b_1(M) = 2$, and whose fundamental groups admit presentations with two generators and one relator. We show that even among this special class, there are $3$-manifolds such that the unit ball of the Thurston norm has arbitrarily many faces. 
\end{abstract}
\begin{document}

\maketitle

\section{Introduction}\label{s:intro}

A \emph{knot} is the image of a smooth embedding of the circle \( \bS^1 \) into the 3-sphere \( \bS^3 \).
Every knot \( K \subset \bS^3 \) bounds an embedded orientable surface in \( \bS^3 - K \) called a \emph{Seifert surface}. 
This leads to a useful knot invariant, called the \emph{genus} of  \( K \), which is the minimal genus of such a Seifert surface. 
For example, the unknot is the only knot whose genus is zero. 
This invariant can be generalized to the \emph{Thurston norm}, which is defined for all compact, connected, orientable $3$--manifolds.

Specifically, for each \( \phi\in H^1(N,\mathbb{Z}) \), let $PD(\phi)$ denote the homology class in  \(H_2(N, \partial N, \mathbb{Z}) \) that is Poincar\'e dual to \( \phi \). 
This class can be represented by a properly embedded oriented surface \( \gS \). 
Let \( \chi_-(\gS)=\sum_{i=1}^k \max\{-\chi(\gS_i),0\} \), where  \( \gS_1, \ldots, \gS_k \) are the connected components of \( \gS \). The \emph{Thurston norm} of \( \phi \)  is defined to be: 
\[
x_N(\phi):=\min \big\{ \chi_-(\gS) | [\gS] = PD(\phi) \in H_2(N, \partial N; \mathbb{Z}) \}.
\]

Thurston showed that \( x_N \) can be extended to a seminorm on \( H^1(N,\rr) \). 
Since a Seifert surface of a knot \( K \) is dual to the generator of  \( H^1(\bS^3 - K, \zz) \) corresponding to a meridional loop, the Thurston norm of this generator is \( 2g(K) -3 \), where \( g(K) \) is the genus of \( K \); hence, the Thurston norm generalizes the knot genus.

The Thurston norm  is also useful in studying the possible ways that a $3$-manifold can fiber over a circle. 
Recall that a class \( \phi\in H^1(N;\mathbb{R}) \) is \emph{fibered} if  \( \phi \) is represented by a non-degenerate closed 1-form; in particular, \( \phi \in H_1(N, \zz)\) is fibered if and only if it is induced by a fibration \( N\to \bS^1 \).

The results of Thurston \cite[Theorems 1, 2 \& 3]{Th86} can be summarized in the following theorem.
A \emph{marked polytope} is a polytope with a distinguished subset of vertices, which we refer to as \emph{marked vertices}.

\begin{Theorem}[Thurston]\label{thm:thurston}
Let $N$ be a $3$-manifold. There exists a unique centrally symmetric marked polytope $\cM_N$ in $H_1(N;\rr)$ such that for any $\phi\in H^1(N;\rr) = \Hom(H_1(N,\rr),\rr)$ we have 
\[ 
x_N(\phi)=\max\{ \phi(p)-\phi(q) : p,q\in \cM_N\}. 
\]
Moreover, \( \phi \) is fibered if and only if \( \phi \) restricted to \( \cM_N \) attains its maximum on a marked vertex. 
\end{Theorem}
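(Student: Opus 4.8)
The plan is to prove the statement in three stages and then assemble $\cM_N$ by convex duality. Stage one: $x_N$ is a genuine seminorm on $H^1(N;\rr)$. Stage two: its unit ball is a finite-sided polytope. Stage three: the set of fibered classes is the cone over a union of relative interiors of top-dimensional faces of that ball. Granting these, I would let $\cM_N$ be one half of the polar dual of the unit ball, mark the vertices dual to the fibered faces, and read off each conclusion of the theorem; uniqueness is then automatic, since the norm ball and the fibered locus depend only on $N$.

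For stage one, $x_N(\phi)$ is finite because $PD(\phi)$ is carried by \emph{some} properly embedded oriented surface, and $x_N(-\phi)=x_N(\phi)$ by reversing orientation. The substance is that on the lattice $H^1(N;\zz)$ the function $x_N$ is subadditive and satisfies $x_N(n\phi)=|n|\,x_N(\phi)$; with the symmetry just noted, these properties extend $x_N$ uniquely, by density, to a continuous seminorm on all of $H^1(N;\rr)$. Subadditivity I would get by taking incompressible norm-minimizing oriented surfaces $\gS_\phi,\gS_\psi$ dual to $\phi$ and $\psi$, putting them in efficient position (no circle of $\gS_\phi\cap\gS_\psi$ bounding a disk, using irreducibility of $N$), and performing the oriented cut-and-paste along $\gS_\phi\cap\gS_\psi$: the result is an embedded oriented surface $\gS$ dual to $\phi+\psi$ with $\chi(\gS)=\chi(\gS_\phi)+\chi(\gS_\psi)$ and, by efficiency, no sphere or disk components, so $x_N(\phi+\psi)\le\chi_-(\gS)=-\chi(\gS)=\chi_-(\gS_\phi)+\chi_-(\gS_\psi)$. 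For homogeneity, $x_N(n\phi)\le|n|\,x_N(\phi)$ follows from taking $|n|$ parallel copies of a minimizer, while the reverse inequality $x_N(n\phi)\ge n\,x_N(\phi)$ is subtler and is where genuine $3$-manifold topology enters: from a norm-minimizing surface dual to $n\phi$ one must extract a surface dual to $\phi$ of at most $1/n$ of the complexity, which Thurston achieves by a cut-and-reassemble argument relative to a map $N\to\bS^1$ representing $\phi$.

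For stage two, because the resulting seminorm is integer-valued on the lattice $H^1(N;\zz)$, the polar dual $B^\circ\subset H_1(N;\rr)$ of the unit ball $B=\{\phi:x_N(\phi)\le 1\}$ is a polytope --- a standard fact about integral seminorms, proved by writing $B^\circ=\bigcap_{\phi\in H^1(N;\zz)}\{v:\phi(v)\le x_N(\phi)\}$ and checking that all but finitely many of these half-spaces are redundant (those from null-space classes beyond a finite spanning set, and those from classes $\phi$ that split as $\phi_1+\phi_2$ with $x_N(\phi)=x_N(\phi_1)+x_N(\phi_2)$, whose half-space is implied by those of $\phi_1$ and $\phi_2$). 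Setting $\cM_N:=\frac{1}{2}B^\circ$, a centrally symmetric polytope, and using that $x_N$ is the support function of $B^\circ$ while $\cM_N$ is symmetric, one obtains $x_N(\phi)=2\max_{p\in\cM_N}\phi(p)=\max\{\phi(p)-\phi(q):p,q\in\cM_N\}$, the first assertion. For stage three, I would first check that an integral primitive fibered class $\phi$ with fiber $F$ has $x_N(\phi)=\chi_-(F)$, because a fiber is incompressible and ($N$ cut along it being a product) minimizes $\chi_-$ in its homology class. Then, given a fibration representing $\phi$, fix a non-singular suspension flow $X$ for it: every class near $\phi$ is represented by a closed $1$-form positive on $X$, hence is again fibered, and all such forms carry the same tangent plane field (the one transverse to $X$), so on the open cone of these classes $x_N$ equals minus the Euler class of that plane field --- a fixed linear functional. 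Hence the fibered classes are exactly the cone over the union of the relative interiors of some collection of top-dimensional faces of $B$.

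Finally, I would declare a vertex $p$ of $\cM_N$ \emph{marked} precisely when the facet of $B$ dual to $p$ lies in that collection; under the polar duality between $B$ and $\cM_N$ --- for a generic $\phi$ the maximum of $\phi$ over $\cM_N$ is attained at the unique vertex dual to the facet of $B$ containing $\phi/x_N(\phi)$ --- the condition ``$\phi$ is fibered'' translates precisely into ``$\phi|_{\cM_N}$ attains its maximum at a marked vertex'', and uniqueness of the marked polytope is immediate. The step I expect to be the main obstacle is the homogeneity lower bound $x_N(n\phi)\ge n\,x_N(\phi)$ together with the norm-minimality of a fiber: both are statements about incompressible surfaces rather than formal convex geometry, and both presuppose the standard reduction to an irreducible, boundary-incompressible $N$; the systematic bookkeeping with sphere and disk components in the cut-and-paste steps is the recurring technical nuisance throughout.
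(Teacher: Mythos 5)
The paper does not prove this theorem; it is quoted as a summary of Theorems 1, 2 and 3 of \cite{Th86}, recast in the marked-polytope language of \cite{ft}, so there is no in-paper argument to compare against. Your outline does track Thurston's original route faithfully: seminorm properties via oriented cut-and-paste and parallel copies, polyhedrality of the unit ball from integrality on the lattice, the fibered-cone statement via a suspension flow and the Euler class of the transverse plane field (so that \( x_N \) is linear on an open cone of classes represented by closed \( 1 \)-forms positive on the flow), and finally convex duality to produce \( \cM_N \) as half the dual ball with the vertices dual to fibered facets marked. That is the right architecture and the right translation into the statement used here, and the uniqueness remark via support functions is fine.

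Judged as a proof, however, the two steps you flag are genuinely missing rather than deferred technicalities: the lower bound \( x_N(n\phi)\ge n\,x_N(\phi) \) and the norm-minimality of a fiber are the real content of Thurston's Theorems 1 and 3, and your argument for the fibered cone collapses without fiber minimality, since transversality to the flow only gives \( x_N \le \) the Euler-class pairing there. Three further points need repair. First, irreducibility of \( N \) is neither assumed nor available: trivial intersection circles in the cut-and-paste are handled by compressing and then discarding sphere (and, with care near \( \partial N \), disk) components, which is legitimate because \( \chi_- \) ignores them. Second, your finiteness argument for \( B^\circ \) (``all but finitely many half-spaces are redundant'') is an assertion, not a proof; Thurston's actual argument shows that at a vertex \( v \) of the dual ball one has \( \phi(v)=x_N(\phi)\in\zz \) for all lattice classes \( \phi \) in a full-dimensional cone, hence (writing an arbitrary lattice class as a difference of two such) for all lattice classes, so the vertices lie in the dual lattice and compactness gives finiteness. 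Third, the ``if and only if'' must be read as ``attains its maximum \emph{exactly} at a marked vertex'': a class whose maximum over \( \cM_N \) is achieved along a positive-dimensional face touching a marked vertex lies on the boundary of a fibered cone and is not fibered, and the degenerate case where \( x_N \) has nontrivial null space (e.g.\ fibrations with torus or annulus fibers) needs to be addressed separately in your marking convention.
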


The polytope \( \cM_N \) is dual to the unit ball of the Thurston norm on \( H^1(N,\rr) \), see Section \ref{s:algo}. 
We let \( \cP_N \) denote \( \cM_N \) without the marked vertices. 
As a sample application, this immediately implies that if a 3-manifold fibers over a circle and the first Betti number \( b_1(N) \) is at least 2, then it fibers in infinitely many ways. 

We are interested in the possible (marked) polytopes that arise as \( \cM_N \) for a 3-manifold \( N \).
We first note some necessary conditions:  If \( \cM \) is the marked polytope of some 3-manifold, then 
\begin{enumerate}
\item \( \cM \) is centrally symmetric, bounded, and convex.
\item \( \cM \) can be translated by a vector \( v \in \frac{1}{2}H_1(N, \zz) \) so that its vertices are in \( H_1(N, \zz) / \text{torsion} \) (this follows from the fact that \( x_N \) assigns integral values to elements of \( H_1(N, \zz) \)).
\end{enumerate}

Thurston showed \cite[Corollary to Theorem 6]{Th86} that for any norm \( x \) on  \( \rr^2 \) that takes even integral values on \( \zz^2 \), there is a closed 3-manifold \( N \) such that \( x_N \)  is equal to \( x \). 
We note that Thurston's proof, while explicit,  gives no control over the complexity of the fundamental groups of the 3-manifolds constructed. 

Agol asked the second author whether any polygon in \( \rr^2 \) satisfying (1) and (2) above is \( \cM_N \) for a very simple 3-manifold \( N \); in particular, a 3-manifold whose fundamental group admits a \( (2,1) \)-presentation of the form  \( \pi_1(N) = \langle x,y | r \rangle \). 
We cannot give a complete answer, but our main theorem states that the restriction to such simple manifolds does not restrict the complexity of the associated norms:

\begin{mainthm}\label{t:main}
For any \( n \in \mathbb{N} \), there exists a 3-manifold \( N \) such that \( \pi_1(N) \) admits a \( (2,1) \)-presentation, \( \cM_N \) is a polygon with \( (2n) \)-sides, every vertex of \( \cM_N \) is marked, and \( \cM_N \) has \( \left \lfloor{\frac{n}{2}}\right \rfloor \) sides of different lengths.
\end{mainthm}

The manifolds in the Main Theorem are constructed by gluing a $2$-handle onto an orientable genus-two handlebody along a regular neighborhood of a homotopically non-trivial simple closed curve (these are usually called \emph{tunnel number one manifolds}). 
Since we require $H_1(N, \rr) \cong \rr^2$, we will always choose the curve to be separating. 
By Theorem \ref{thm:thurston}, every cohomology class in the examples from the main theorem is fibered.

To calculate the Thurston norm of these examples, we use an algorithm of Friedl, the second author, and Tillmann \cite{fst}, which gives an easy way to read off $\cM_N$ from the relator in a \( (2,1) \)-presentation of \( \pi_1(N) \). 
To construct a wide variety of examples, we consider the orbit of a particular separating curve under the mapping class group of the marked boundary surface of the handlebody, which gives us many possible curves to glue $2$-handles onto. 
Dunfield and D. Thurston have used this same construction with a random walk in the mapping class group to show that a random tunnel number one manifold does not fiber over a circle \cite[Theorem 2.4]{dt}. 
In this sense, our examples are non-generic.

In Section \ref{s:algo} we review the algorithm which computes the marked polytope $\cM_N$. In Section \ref{s:mcg}, we recall standard generators of the mapping class group and how they behave as automorphisms of the fundamental group of a surface.
In Section \ref{s:construction} we use these generators to derive a complicated relator $r$ which produces the desired complicated Thurston norm unit ball. 
In the appendix, we do an explicit calculation of the Thurston norm of a $3$-manifold with fundamental group the Baumslag-Solitar group $BS(m,m)$.

\subsection*{Acknowledgements}
We would like to thank Nathan Dunfield for providing us with useful Python code to simplify group presentations, as well as tables of knot group presentations. 
The second author would like to thank Ian Agol for the problem and Stefan Friedl for useful suggestions. 
This work was completed as part of the REU program at the University of Michigan, for the duration of which the first author was supported by NSF grants DMS-1306992 and DMS-1045119. 
We would like to thank the organizers of the Michigan REU program for their efforts.
The second and third authors were partially supported by NSF grant DMS-1045119. This material is based upon work supported by the National Science Foundation under Award No. 1704364. 


\section{Algorithm for the Thurston norm}\label{s:algo}

In \cite{fst}, the following algorithm was given for computing the Thurston norm of $3$-manifolds $N$ with $\pi_1(N) = \langle x,y|r\rangle$ and $b_1(N) = 2$, see also \cite{ft}.
The relator $r$ determines a walk on $H_1(N;\zz)$ in $H_1(N;\rr) \cong \rr^2$. We assume that $r$ is reduced and cyclically reduced. 
The marked polytope $\cM_M$ is constructed as follows (see Figure \ref{f:algorithm} for an example):

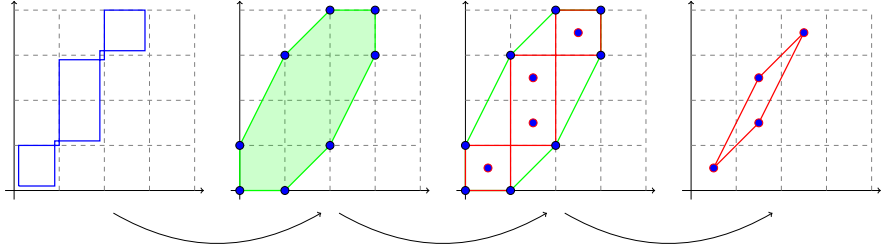
\begin{figure}
\centering
\scalebox{0.6}{
\begin{tikzpicture}
      \draw[->] (-0.2,0) -- (4.2,0);
      \draw[->] (0,-0.2) -- (0,4.2);
      \draw[gray, dashed] (1,0)--(1,4);
      \draw[gray, dashed] (2,0)--(2,4);
      \draw[gray, dashed] (3,0)--(3,4);
      \draw[gray, dashed] (4,0)--(4,4);
      \draw[gray, dashed] (0,1)--(4,1);
      \draw[gray, dashed] (0,2)--(4,2);
      \draw[gray, dashed] (0,3)--(4,3);
      \draw[gray, dashed] (0,4)--(4,4);
      \draw[thick, blue] 
          (0.1,0.1) 
       -- (0.9, 0.1)
       -- (0.9, 1.1)
       -- (1.9, 1.1)
       -- (1.9, 3.1)
       -- (2.9, 3.1)
       -- (2.9, 4)
       -- (2, 4)
       -- (2, 2.9)
       -- (1, 2.9)
       -- (1, 1)
       -- (0.1, 1)
       -- (0.1, 0.1);
      \draw[->] (4.8,0) -- (9.2,0);
      \draw[->] (5,-0.2) -- (5,4.2);
      \draw[gray, dashed] (6,0)--(6,4);
      \draw[gray, dashed] (7,0)--(7,4);
      \draw[gray, dashed] (8,0)--(8,4);
      \draw[gray, dashed] (9,0)--(9,4);
      \draw[gray, dashed] (5,1)--(9,1);
      \draw[gray, dashed] (5,2)--(9,2);
      \draw[gray, dashed] (5,3)--(9,3);
      \draw[gray, dashed] (5,4)--(9,4);
      \draw[thick, green]
          (5, 0)
       -- (6, 0)
       -- (7, 1)
       -- (8, 3)
       -- (8, 4)
       -- (7, 4)
       -- (6, 3)
       -- (5, 1)
       -- (5, 0);
      \fill[green,fill opacity=0.2]
          (5, 0)
       -- (6, 0)
       -- (7, 1)
       -- (8, 3)
       -- (8, 4)
       -- (7, 4)
       -- (6, 3)
       -- (5, 1)
       -- (5, 0);
      \node[circle,draw=black, fill=blue, inner sep=0pt,minimum size=5pt] (b) at (5, 0) {};
      \node[circle,draw=black, fill=blue, inner sep=0pt,minimum size=5pt] (b) at (6, 0) {};
      \node[circle,draw=black, fill=blue, inner sep=0pt,minimum size=5pt] (b) at (7, 1) {};
      \node[circle,draw=black, fill=blue, inner sep=0pt,minimum size=5pt] (b) at (8, 3) {};
      \node[circle,draw=black, fill=blue, inner sep=0pt,minimum size=5pt] (b) at (8, 4) {};
      \node[circle,draw=black, fill=blue, inner sep=0pt,minimum size=5pt] (b) at (7, 4) {};
      \node[circle,draw=black, fill=blue, inner sep=0pt,minimum size=5pt] (b) at (6, 3) {};
      \node[circle,draw=black, fill=blue, inner sep=0pt,minimum size=5pt] (b) at (5, 1) {};
      \draw[->] (9.8,0) -- (14.2,0);
      \draw[->] (10,-0.2) -- (10,4.2);
      \draw[gray, dashed] (11,0)--(11,4);
      \draw[gray, dashed] (12,0)--(12,4);
      \draw[gray, dashed] (13,0)--(13,4);
      \draw[gray, dashed] (14,0)--(14,4);
      \draw[gray, dashed] (10,1)--(14,1);
      \draw[gray, dashed] (10,2)--(14,2);
      \draw[gray, dashed] (10,3)--(14,3);
      \draw[gray, dashed] (10,4)--(14,4);
      \draw[thick, green]
          (10, 0)
       -- (11, 0)
       -- (12, 1)
       -- (13, 3)
       -- (13, 4)
       -- (12, 4)
       -- (11, 3)
       -- (10, 1)
       -- (10, 0);
      \draw[red, thick]
          (10, 0)
       -- (11, 0)
       -- (11, 3)
       -- (13, 3)
       -- (13, 4)
       -- (12, 4)
       -- (12, 1)
       -- (10, 1)
       -- (10, 0);
      \node[circle,draw=black, fill=blue, inner sep=0pt,minimum size=5pt] (b) at (10, 0) {};
      \node[circle,draw=black, fill=blue, inner sep=0pt,minimum size=5pt] (b) at (11, 0) {};
      \node[circle,draw=black, fill=blue, inner sep=0pt,minimum size=5pt] (b) at (12, 1) {};
      \node[circle,draw=black, fill=blue, inner sep=0pt,minimum size=5pt] (b) at (13, 3) {};
      \node[circle,draw=black, fill=blue, inner sep=0pt,minimum size=5pt] (b) at (13, 4) {};
      \node[circle,draw=black, fill=blue, inner sep=0pt,minimum size=5pt] (b) at (12, 4) {};
      \node[circle,draw=black, fill=blue, inner sep=0pt,minimum size=5pt] (b) at (11, 3) {};
      \node[circle,draw=black, fill=blue, inner sep=0pt,minimum size=5pt] (b) at (10, 1) {};
      \node[circle,draw=red, fill=blue, inner sep=0pt,minimum size=5pt] (b) at (10.5, 0.5) {};
      \node[circle,draw=red, fill=blue, inner sep=0pt,minimum size=5pt] (b) at (11.5, 1.5) {};
      \node[circle,draw=red, fill=blue, inner sep=0pt,minimum size=5pt] (b) at (11.5, 2.5) {};
      \node[circle,draw=red, fill=blue, inner sep=0pt,minimum size=5pt] (b) at (12.5, 3.5) {};
      \draw[->] (14.8,0) -- (19.2,0);
      \draw[->] (15,-0.2) -- (15,4.2);
      \draw[gray, dashed] (16,0)--(16,4);
      \draw[gray, dashed] (17,0)--(17,4);
      \draw[gray, dashed] (18,0)--(18,4);
      \draw[gray, dashed] (19,0)--(19,4);
      \draw[gray, dashed] (15,1)--(19,1);
      \draw[gray, dashed] (15,2)--(19,2);
      \draw[gray, dashed] (15,3)--(19,3);
      \draw[gray, dashed] (15,4)--(19,4);
      \draw[thick, red]
          (15.5, 0.5)
       -- (16.5, 1.5)
       -- (17.5, 3.5)
       -- (16.5, 2.5)
       -- (15.5, 0.5);
      \node[circle,draw=red, fill=blue, inner sep=0pt,minimum size=5pt] (b) at (15.5, 0.5) {};
      \node[circle,draw=red, fill=blue, inner sep=0pt,minimum size=5pt] (b) at (16.5, 1.5) {};
      \node[circle,draw=red, fill=blue, inner sep=0pt,minimum size=5pt] (b) at (16.5, 2.5) {};
      \node[circle,draw=red, fill=blue, inner sep=0pt,minimum size=5pt] (b) at (17.5, 3.5) {};
      \draw[->] (2.2, -0.5) to [out=330,in=210] (6.8, -0.5);
      \draw[->] (7.2, -0.5) to [out=330,in=210] (11.8, -0.5);
      \draw[->] (12.2, -0.5) to [out=330,in=210] (16.8, -0.5);
\end{tikzpicture}
}
\caption{ The algorithm applied to the group: $\langle x,y \,|\, r=xyxyyxyXYXYYXY\rangle$, where capital letters denote inverses. This is the first case of our sequence of examples in Section \ref{s:construction}. }
\label{f:algorithm}
\end{figure}

\begin{enumerate} 
\item Start at the origin and walk on $\zz^2$ reading the word $r$ from left to right.
\item Take the convex hull $\mathcal{C}$ of the lattice points reached by the walk.
\item Mark the vertices of $\mathcal{C}$ which the walk passes through exactly once.
\item Consider the unit squares that are contained in $\mathcal{C}$ and touch a vertex of $\mathcal{C}$. Mark a midpoint of a square precisely when a vertex of $\mathcal{C}$ incident with the square is marked.
\item The set of vertices of $\cM_N$ is the set of midpoints of these squares, and a vertex of $\cM_N$ is marked precisely when it is a marked midpoint of a square.
\end{enumerate}

To obtain the unit ball of the Thurston norm $x_{N}$, we consider the dual polytope
\[
\cM_N^\ast := \{\phi \in H^1(N,\rr) : \phi(v) - \phi(w) \le 1 \text{ for all } v,w \in \cM_N\}.
\]


\section{Mapping class groups}
\label{s:mcg}

\begin{figure}
\centering
\begin{minipage}{.5\textwidth}
  \centering
  \begin{tikzpicture}
  	\node[anchor=south west, inner sep=0] (image) at (0,0) {\includegraphics[width=.8\linewidth]{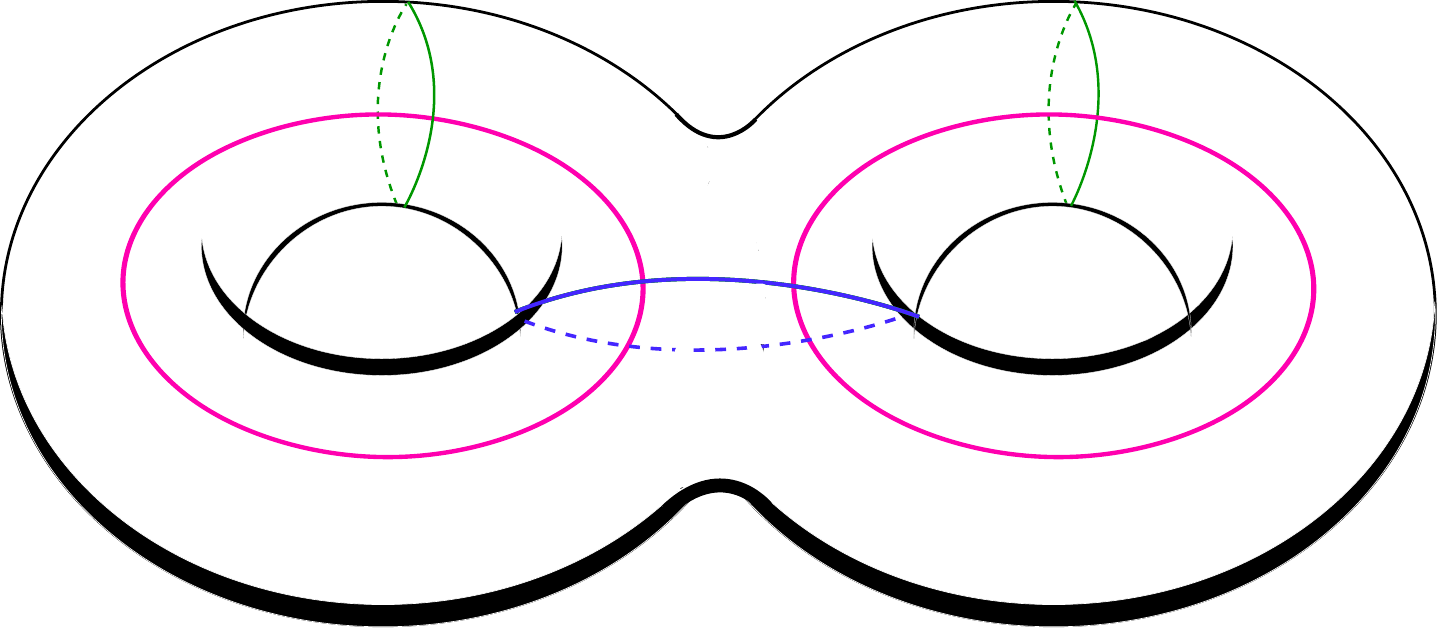}};
	\begin{scope}[
        x={(image.south east)},
        y={(image.north west)}
       ]
       		\node [torusgreen, font=\bfseries] at (0.33, 0.86) {$a$};
       		\node [torusgreen, font=\bfseries] at (0.79, 0.86) {$e$};
       		\node [toruspink, font=\bfseries] at (0.06, 0.6) {$b$};
       		\node [toruspink, font=\bfseries] at (0.94, 0.6) {$d$};
		\node [torusblue, font=\bfseries] at (0.5, 0.62) {$c$};
       \end{scope}
  \end{tikzpicture}
\end{minipage}%
\begin{minipage}{.5\textwidth}
  \centering
  \begin{tikzpicture}
      \node[anchor=south west,inner sep=0] (image) at (0,0) {\includegraphics[width=.8\linewidth]{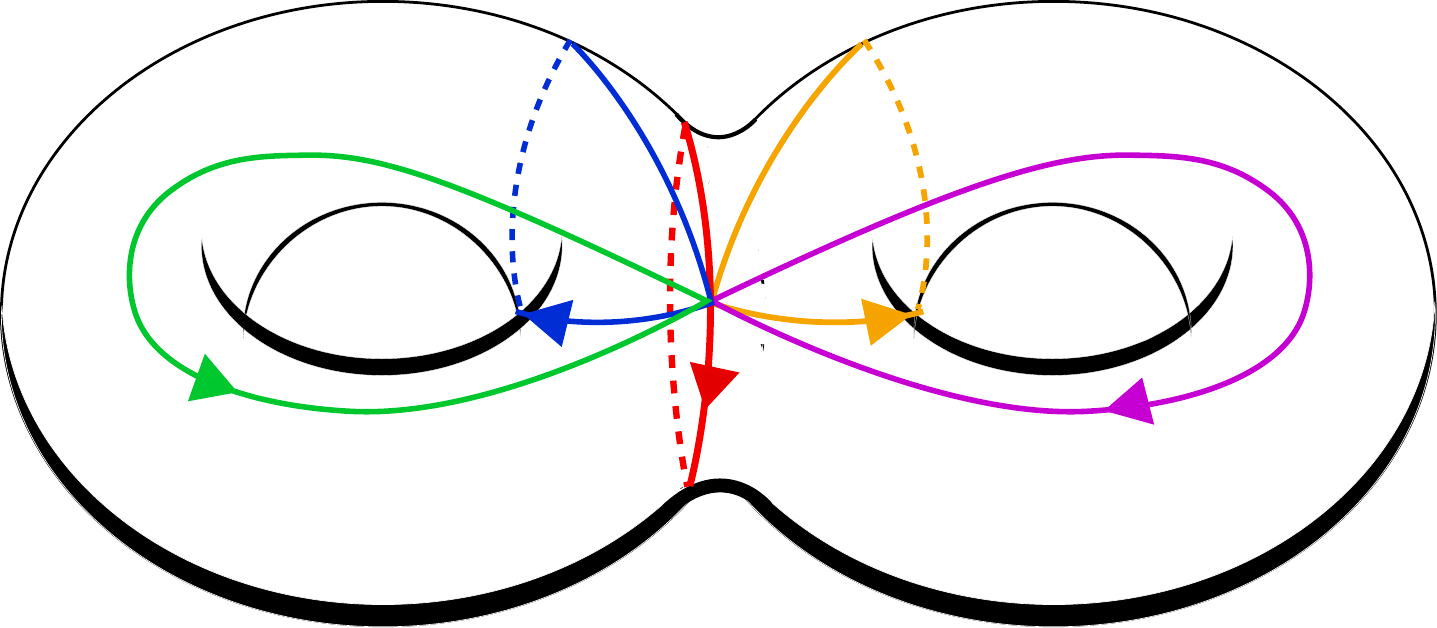}};
      \begin{scope}[
        x={(image.south east)},
        y={(image.north west)}
      ]
           \node [torusgreen, font=\bfseries] at (0.2,0.8) {$x$};
           \node [torusblue, font=\bfseries] at (0.4,0.8) {$z$};
           \node [torusyellow, font=\bfseries] at (0.67,0.8) {$w$};
           \node [toruspink, font=\bfseries] at (0.8,0.8) {$y$};
           \node [torusred, font=\bfseries] at (0.5,0.15) {$\gamma$};
      \end{scope}
  \end{tikzpicture}
\end{minipage}
\caption{The right Dehn twists about the curves \( a,b,c,d \), and \( e \) generate \( \Mod(\Sigma_2) \).  The loops \( w,x,y, \) and \( z \) are standard generators for \( \pi_1(\Sigma_2) \), which we will use throughout the article.  The curve \( \gamma \) in \( \pi_1(\Sigma_2) \) is the commutator \( [x,z] \).
}
\label{fig:t2twists}
\end{figure}

For background and details on mapping class groups, we refer the reader to \cite{fm}.
For our purposes, we focus on a single surface: the closed genus two surface \( \Sigma_2 \).
As we will be working with fundamental groups, we need to keep track of a basepoint.
Fix a point \( p \) in \( \Sigma_2 \).
Let \( \Homeo^+(\Sigma_2, p) \) denote the group of all orientation-preserving self-homeomorphisms of \( \Sigma_2 \) fixing the point \( p \), and let \( \Homeo_0(\Sigma_2, p) \) be the subgroup of \( \Homeo^+(\gS_2,p) \) consisting of homeomorphisms isotopic to the identity through isotopies that fix the point \( p \) at every stage.

\begin{definition}
The \emph{mapping class group} of the marked surface \( (\gS_2, p) \) is the group 
\[
\Mod(\Sigma_2,p) = \Homeo^+(\Sigma_2,p)/\Homeo_0(\Sigma_2,p)
\]
\end{definition}

By keeping track of the point \( p \), we obtain a natural action of \( \Mod(\gS_2,p) \) on \( \pi_1(\gS_2, p) \).
In fact, this action induces an isomorphism from \( \Mod(\gS_2,p) \) to an index-2 subgroup of \( \Aut(\pi_1(\gS_2,p)) \) (see \cite[Chapter 8]{fm}).

Given a simple closed curve \( c \) on \( \gS_2 \), we let \( T_c \in \Mod(\gS_2,p) \) denote the isotopy class of a right Dehn twist about \( c \).
The five Dehn twists \( T_a, T_b, T_c, T_d, T_e \) about the curves \( a,b,c,d,e \) as shown in Figure \ref{fig:t2twists} generate \( \Mod(\gS_2, p) \) (see \cite[Theorem 4.13]{fm}).
In the proof of Theorem \ref{lem:evenk}, we will need explicit computations of the action of mapping classes on elements of the fundamental group; the action of the generators is given in the lemma below, whose proof we leave as an exercise for the reader.

\begin{lemma}\label{t:twistformulas} 
Let $w,x,y,z$ be the standard generators of $\pi_1(\Sigma_2, p)$ shown in Figure~\ref{fig:t2twists}.
If $T_a, T_b, T_c, T_d, T_e$ are the Dehn twists about the curves \( a,b,c,d,e \) as in Figure~\ref{fig:t2twists}, then

\begin{multicols}{2}
\begin{itemize}
\item      $T_a: x \mapsto z^{-1}x$  
\item       $T_b: z \mapsto xz$  
\item        $T_c: x \mapsto xwz\inv$,  $\ y\mapsto ywz\inv$ 
\item       $T_d: w \mapsto y\inv w$   
\item        $T_e: y \mapsto wy$

\item      $T_{a}\inv: x \mapsto zx$  
\item       $T_{b}\inv: z \mapsto x^{-1}z$  
\item        $T_{c}\inv: x \mapsto xzw\inv$,  $\ y\mapsto yzw\inv$ 
\item       $T_{d}\inv: w \mapsto yw$   
\item        $T_{e}\inv: y \mapsto w\inv y$
\end{itemize}
\end{multicols}
where an automorphism fixes a generator of \( \pi_1(\gS_2,p) \) if it is not listed.
\end{lemma}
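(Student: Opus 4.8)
The plan is to compute each of these automorphisms directly from a geometric model, using the standard description of a Dehn twist as a homeomorphism supported in an annular neighborhood of the twisting curve. Concretely, if $T_c$ is a right Dehn twist about a simple closed curve $c$ with annular support $A_c$, and $\gamma$ is a based loop transverse to $c$ with basepoint $p\notin A_c$, then $T_c(\gamma)$ is obtained from $\gamma$ by inserting, at each point of $\gamma\cap c$, a parallel copy of $c$ traversed in the direction fixed by the handedness of the twist together with the local sign of the intersection; equivalently, one cuts $\Sigma_2$ along $c$ and reglues by a full twist. First I would fix, once and for all, explicit representatives of the generating loops $w,x,y,z$ and of the curves $a,b,c,d,e$ exactly as drawn in Figure~\ref{fig:t2twists}, arranged so that all intersections are transverse and minimal, with $p$ chosen disjoint from all five twisting annuli.

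With this set-up the computation reduces to reading intersection patterns off the figure. One checks that $a$ meets only the loop $x$, in one point; $b$ meets only $z$; $c$ meets only $x$ and $y$, in one point each, and is disjoint from $z$ and from $w$; $d$ meets only $w$; and $e$ meets only $y$. Consequently each $T_\bullet$ fixes — after an isotopy pushing it off the support of the twist — every generator it does not meet, which accounts for the ``fixes a generator not listed'' clause. For each generator it does meet, $T_\bullet$ modifies it by a single insertion of a parallel copy of the core curve; writing that copy in the basis $\{w,x,y,z\}$ (for instance, the core of the annulus about $c$, slid to the basepoint, is the loop $wz^{-1}$, which is why $T_c$ appends $wz^{-1}$ to both $x$ and $y$) yields the five listed formulas, and the left twists $T_\bullet^{-1}$ insert the inverse core curves, giving the remaining five.

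Two routine sanity checks pin down the signs and orientations, which are the only real source of error. First, the induced maps on $H_1(\Sigma_2;\zz)$ must be the symplectic transvections $v\mapsto v+\langle v,[c]\rangle[c]$; taking the form with $\langle x,z\rangle=\langle w,y\rangle=-1$, the five formulas reduce to the transvections along $[a]=z$, $[b]=x$, $[c]=w-z$, $[d]=y$, and $[e]=w$, and one verifies that these classes realize precisely the chain intersection pattern $a$--$b$--$c$--$d$--$e$ visible in the figure. Second, the listed $T_\bullet$ and $T_\bullet^{-1}$ are mutually inverse on each generator — e.g.\ $T_a\bigl(T_a^{-1}(x)\bigr)=T_a(zx)=z(z^{-1}x)=x$ and $T_c\bigl(T_c^{-1}(x)\bigr)=T_c(xzw^{-1})=(xwz^{-1})\,z\,w^{-1}=x$ — and, if a stronger check is wanted, one can confirm that $T_a,\dots,T_e$ satisfy the chain relation, or that each of them preserves the relator of $\pi_1(\Sigma_2)$ in the chosen generators up to conjugacy. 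The one genuine difficulty here is purely bookkeeping: keeping the handedness of the twists, the orientations of $a,\dots,e$ and of $w,x,y,z$, and the basepoint conventions mutually consistent — which is exactly why a carefully drawn figure is indispensable and why the verification is best left to the reader. (An alternative that sidesteps the picture-chasing is to realize $\Sigma_2$ as a flat octagon, write each $T_\bullet$ as an explicit piecewise-affine homeomorphism, and track the generating loops through it, but the annulus/insertion argument above is considerably shorter.)
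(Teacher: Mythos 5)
The paper does not actually prove this lemma --- it is stated with ``whose proof we leave as an exercise for the reader'' --- so there is no argument of record to compare against; your proposal is exactly the standard solution to that exercise, and it is correct. The intersection data you read off (each of $a,b,c,d,e$ meets exactly one generator once, except $c$, which meets $x$ and $y$ once each) matches the figure, and your two sanity checks are genuinely confirming: on homology the formulas are the transvections along $[a]=z$, $[b]=x$, $[c]=w-z$, $[d]=y$, $[e]=w$ with respect to the form $\langle x,z\rangle=\langle w,y\rangle=-1$, and these classes do realize the chain pattern ($\pm 1$ for consecutive curves, $0$ otherwise); likewise the listed left and right twists are inverse to each other on each generator since each twist fixes the letters being inserted. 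One small caveat worth being explicit about: the ``insert a parallel copy of the core at each intersection point'' rule a priori gives a word with the core curve inserted at the intersection point, i.e.\ a formula of the shape $x_1(wz^{-1})x_2$ rather than $x\cdot wz^{-1}$; getting the clean right-multiplication formulas on the nose (not merely up to conjugacy) depends on where along each generator the intersection occurs relative to the basepoint in the chosen figure. You flag this as the bookkeeping issue, which is fair, but it is the one place where the figure, and not just the intersection counts, is doing real work.
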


\section{Main result}
\label{s:construction}

A \emph{handlebody} is a compact, orientable, irreducible 3-manifold with a nonempty connected boundary whose inclusion is \( \pi_1 \)-surjective.
The \emph{genus} of a handlebody \( H \) is defined to be the genus of \( \partial H \).
We will focus on the genus two handlebody, which one can visualize as the compact 3-manifold bounded by an embedded copy of a genus two surface in \( \rr^3 \) (such an embedding is drawn in Figure \ref{fig:t2twists}).
 
Given a simple closed curve \( \gamma \) in the boundary of a genus two handlebody \( H \), we define the 3-manifold \( M_\gamma \) to be
\[
M_\gamma = H\,  \bigcup_{\nu(\gamma)}\,  (D^2 \times [0,1]),
\]
where \( D^2 \) denotes the 2-disk, \( \nu(\gamma) \) is a regular neighborhood of \( \gamma \) in \( \partial H \) homeomorphic to \( \gamma \times [0,1] \), and \( \nu(\gamma) \) is identified with \(\partial D^2 \times [0,1] \).
Colloquially, \( M_\gamma \) is the manifold obtained by gluing a 2-handle onto \( \gamma \).

\begin{lemma}
Let $\gamma$ be a simple closed curve on the boundary of a genus two handlebody $H$.
If \( \gamma \) is separating (that is, \( \partial H \ssm \gamma \) is disconnected), then $b_1(M_{\gamma}) = 2$, where \( b_1 \) denotes the first Betti number.
\end{lemma}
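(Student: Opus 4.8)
The plan is to compute $H_1(M_\gamma;\zz)$ directly from the decomposition $M_\gamma = H \cup_{\nu(\gamma)} (D^2\times[0,1])$ of the definition, using Mayer--Vietoris. The ingredients are elementary. The genus-two handlebody $H$ deformation retracts onto a wedge of two circles, so $H_1(H;\zz)\cong\zz^2$; the attached $2$-handle $D^2\times[0,1]$ is a $3$-ball, hence contractible; and the gluing region $\nu(\gamma)\cong\gamma\times[0,1]$ is an annulus, homotopy equivalent to $S^1$, with $H_1(\nu(\gamma);\zz)\cong\zz$ generated by a core curve that is isotopic in $\partial H$ to $\gamma$. Also $M_\gamma$ is connected, since $H$ and the gluing region are.

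Thickening the two pieces slightly so that they are open and intersect in (a neighborhood of) $\nu(\gamma)$, the Mayer--Vietoris sequence contains the segment
\[
H_1(\nu(\gamma)) \longrightarrow H_1(H)\oplus H_1(D^2\times[0,1]) \longrightarrow H_1(M_\gamma) \longrightarrow H_0(\nu(\gamma)) \longrightarrow H_0(H)\oplus H_0(D^2\times[0,1]),
\]
in which I name the leftmost map $f$, the connecting map $\partial$, and the rightmost map $g$. Since both pieces are path-connected, $g\colon\zz\to\zz\oplus\zz$ is the injective diagonal map, so $\partial=0$, and exactness gives $H_1(M_\gamma)\cong\bigl(H_1(H)\oplus H_1(D^2\times[0,1])\bigr)/\mathrm{im}(f)\cong\zz^2/\langle f(1)\rangle$, where $1$ denotes a generator of $H_1(\nu(\gamma))$. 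The map $f$ is induced by the two inclusions of $\nu(\gamma)$; its component into $H_1(D^2\times[0,1])=0$ vanishes, and its component into $H_1(H)$ sends the generator to the image $[\gamma]_H$ of $[\gamma]$ under $H_1(\partial H)\to H_1(H)$. Hence $H_1(M_\gamma;\zz)\cong\zz^2/\langle[\gamma]_H\rangle$.

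The only place the hypothesis is used is the final step: a separating simple closed curve on a closed orientable surface bounds one of its two complementary subsurfaces (as a $2$-chain) and is therefore null-homologous, so $[\gamma]=0$ in $H_1(\partial H;\zz)$, and consequently $[\gamma]_H=0$ in $H_1(H;\zz)$. Thus $f=0$ and $H_1(M_\gamma;\zz)\cong\zz^2$, giving $b_1(M_\gamma)=2$. I do not anticipate a genuine obstacle; the only care required is the standard bookkeeping of the Mayer--Vietoris maps together with the well-known fact that separating curves are null-homologous. As a cross-check one can instead apply van Kampen to get $\pi_1(M_\gamma)\cong\pi_1(H)/\langle\!\langle[\gamma]\rangle\!\rangle$, then abelianize and use $[\gamma]=0\in H_1(H)$ to recover $H_1(M_\gamma)\cong\zz^2$.
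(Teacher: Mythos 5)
Your proof is correct and follows essentially the same route as the paper's: both compute $H_1(M_\gamma)$ as $\zz^2$ modulo the class of the attaching curve (you via Mayer--Vietoris on the decomposition $H\cup_{\nu(\gamma)}(D^2\times[0,1])$, the paper via the cellular chain complex of the resulting two $1$-cell, one $2$-cell complex), and both hinge on the same key fact that a separating curve is null-homologous in $\partial H$ (equivalently, has zero exponent sums in $x$ and $y$), so the relator kills nothing in homology. The difference is purely one of homological bookkeeping, not of substance.
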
 

\begin{proof}
We can deformation retract $M_{\gamma}$ to a complex with two $1$-cells and one $2$-cell. Therefore, $b_1 = 1$ or $2$. Let $x$ and $y$ denote the generators of $\pi_1(H)$, and $x,y,z,w$ the generators of $\pi_1(\partial H)$. The attaching map of the two cell comes from removing the letters $z$ and $w$ from the word that $\gamma$ represents in $\pi_1(\partial H)$. 

If $\gamma$ is separating, then $\gamma$ bounds a subsurface of $\partial H$, and hence $[\gamma] \in [\pi_1(\partial H),\pi_1(\partial H)]$.
Therefore, the total sum of the exponents of $x$ and $y$ in $\gamma$  is zero, so the boundary map of the $2$-cell is zero on homology. 
\end{proof}

We now give the construction we will be using in the proof of the Main Theorem.
For the remainder of the section, we fix \( H \) to be a genus two handlebody, \( p \) to be a point in \( \partial H \), \( w,x,y,z \) to be the standard generators of \( \pi_1(\partial H, p) \) shown in Figure \ref{fig:t2twists}, and \( a,b,c,d,e \) to be the simple closed curves on \( \partial H \) as shown in Figure \ref{fig:t2twists}.
In addition, fix a simple, separating closed curve \( \gamma \) on \( \partial H \) representing the commutator \( [x,z] \in \pi_1(\partial H) \).

\begin{definition}
Given an element \( g \in \Mod(\partial H, p) \), define \( M_g \) to be the manifold
\[
M_g = M_{g(\gamma)}.
\]
\end{definition}

\begin{Theorem}
\label{lem:evenk}
For each $n \in \mathbb{N}$ let $g_n = T_{b}^{-1}(T_{d}^{-1}T_c)^{n+1}$.  The polygon $\mathcal{M}_{M_{g_n}}$ has $4n$ marked vertices. 
\end{Theorem}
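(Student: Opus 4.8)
The plan is to compute the element $g_n(\gamma) \in \pi_1(\partial H, p)$ explicitly as a cyclically reduced word in $w, x, y, z$, then delete the letters $z$ and $w$ to obtain the relator $r_n$ in the $(2,1)$-presentation of $\pi_1(M_{g_n})$, and finally run the Friedl–Schreve–Tillmann algorithm from Section~\ref{s:algo} on $r_n$ to read off $\mathcal{M}_{M_{g_n}}$. First I would apply the twist formulas of Lemma~\ref{t:twistformulas} iteratively: starting from $\gamma = [x,z] = xzx^{-1}z^{-1}$, compute the effect of $(T_d^{-1}T_c)^{n+1}$ and then of $T_b^{-1}$. Since $T_c$ sends $x \mapsto xwz^{-1}$ and $y \mapsto ywz^{-1}$ while fixing $w, z$, and $T_d^{-1}$ sends $w \mapsto yw$ while fixing the rest, the composite $T_d^{-1}T_c$ acts on the subword structure in a controlled, repetitive way — I expect the image to be a word whose $x,y$-skeleton (after deleting $z,w$) looks like an $n$-fold concatenation of a fixed block together with its inverse, producing the kind of "staircase" lattice walk shown in Figure~\ref{f:algorithm}. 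I would prove by induction on $n$ a clean closed form for $(T_d^{-1}T_c)^{n+1}(\gamma)$.

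Next I would carry out the deletion of $z$ and $w$ (justified by the Lemma preceding the theorem statement, which identifies the attaching map of the $2$-cell) to get the explicit relator $r_n$, check that it is reduced and cyclically reduced (so the algorithm applies), and identify the lattice walk it determines in $\mathbb{Z}^2 \cong H_1(M_{g_n};\mathbb{Z})$. The key structural claim is that this walk traces out a staircase going "up and to the right" with $n$ distinct step-heights, then a staircase returning along a shifted parallel path, so that the convex hull $\mathcal{C}$ is a lattice polygon with roughly $2n$ extreme points, each visited exactly once by the walk. Then steps (3)–(5) of the algorithm: every hull vertex is passed through exactly once, hence marked, hence the associated square-midpoints are all marked, and one counts that the number of qualifying unit squares — those contained in $\mathcal{C}$ and meeting a vertex of $\mathcal{C}$ — is exactly $4n$. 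This is the combinatorial heart: verifying that the "corner squares" of the staircase hull number $4n$ and are pairwise distinct, which should follow from the explicit coordinates of the hull vertices computed in the previous step.

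The main obstacle I anticipate is controlling the reduction process: after applying the twist formulas the raw word $g_n(\gamma)$ will have many cancellations between $z^{-1}$'s introduced by $T_c$ and $z$'s already present, and between the $w$'s and $y$'s shuffled by $T_d^{-1}$, so getting the clean closed form — and in particular verifying cyclic reducedness and that each hull vertex is hit exactly once (not more) — requires care. A secondary subtlety is the final count: one must check that no two corners of the staircase contribute the same square-midpoint and that the bottom/top "caps" of the polygon contribute the right number of extra marked vertices to land exactly on $4n$ rather than $4n \pm O(1)$; I would pin this down by writing out the $n=1$ case (which is exactly the example $r = xyxyyxyXYXYYXY$ of Figure~\ref{f:algorithm}, giving $4$ marked vertices) and the $n=2$ case as base checks, then arguing the inductive step adds exactly $4$ marked vertices each time. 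The statement about $\lfloor n/2 \rfloor$ side-lengths in the Main Theorem presumably comes from the step-heights $1, 2, \dots, n$ being distinct, but that is not needed for this particular theorem's claim of $4n$ marked vertices.
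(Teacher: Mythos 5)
Your overall strategy matches the paper's: apply the twist formulas, get a closed form for the image of \( \gamma \) by induction, delete \( z \) and \( w \), and run the algorithm of Section \ref{s:algo} on the resulting walk. But as written the proposal defers exactly the parts where the theorem lives, and the structure you guess for the word is wrong in a way that would sink the count. The \( x,y \)-skeleton of \( g_n(\gamma) \) is \emph{not} an \( n \)-fold concatenation of a fixed block, nor is the walk a staircase with step heights \( 1,2,\dots,n \): the paper shows \( g_n(xz) = xA_1A_2\cdots A_nB_nB_{n-1}\cdots B_1yw \), where the blocks satisfy the Fibonacci-type recursion \( A_{k+1}=B_kA_k \), \( B_{k+1}=B_kB_kA_k \) (this is why the relators have lengths \( 14, 40, \dots \) rather than growing linearly), and the widths/heights of the blocks are consecutive Fibonacci numbers. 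If the skeleton really were a fixed block repeated \( n \) times, the outgoing walk would lie along a single line and the hull would have a bounded number of vertices, so the "\( 4n \)" count would fail; the fact that the corner points \( p_k \) (with increments \( (f_{2k-2},f_{2k-1}) \) and \( (f_{2k-1},f_{2k}) \)) are genuinely distinct extreme points of the hull rests on the strict monotonicity of the slopes \( f_{2k-1}/f_{2k-2} \), and one must also show the rest of the walk never crosses to the outside of the segments \( \ell_k \) joining consecutive \( p_k \) — an induction the paper does block-by-block and which your outline does not identify at all.

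Two further counting points are asserted rather than argued. First, your claim that each hull vertex is "visited exactly once" (needed for the marking) and, more importantly, that \emph{only} the corner points become hull vertices, is precisely the content of the convexity argument above. Second, the passage from hull vertices to vertices of \( \cM_{M_{g_n}} \) is not "one square per corner": the hull has \( 4n+4 \) vertices, and the theorem's \( 4n \) arises because exactly two triples of hull vertices (near \( p_0 \) and near \( p_{2n+2} \)) are incident to a common unit square and so collapse to a single vertex of \( \cM_{M_{g_n}} \), while no other collapses occur; your "bottom/top caps contribute the right number" gestures at this but gives no mechanism. Also note the paper sidesteps the cancellation problem you flag as your main obstacle by conjugating: with \( g = T_b^{-1}T_d^{-1}T_cT_b \) one has \( g_{n+1}=g\cdot g_n \), and the induction is run on the identities \( gA_k=A_{k+1} \), \( gB_k=B_{k+1} \), \( g(x)=xA_1 \), \( g(yw)=B_1yw \), which keeps the words in a normal form with no hidden cancellation. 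As it stands, then, the proposal is a reasonable plan but not a proof: the closed form, the convex-hull identification, and the exact \( 4n \) bookkeeping are all missing, and the anticipated combinatorial shape of the walk is incorrect.
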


\begin{Remark}
Here are the first two relators that this algorithm constructs (capital letters denote inverses). 

\begin{enumerate}
\item xyxyyxyXYXYYXY

\item xyxyyxyxyyxyyxyxyyxyXYXYYXYXYYXYYXYXYYXY

\end{enumerate}

Successive relators have a similar structure; namely, the first half of the relator consists of a word containing only positive powers of \( x \) and \( y \) with the second half obtained from the first by replacing each instance of \( x \) and \( y \) with their inverses.
\end{Remark}

\begin{proof}[Proof of Theorem \ref{lem:evenk}]

Let $g = T_b^{-1}T_d^{-1}T_cT_b$, so that $g_{n+1}=g\cdot g_n$. We will inductively define two sequences of words in the generators of $\pi_1(\partial H, p)$: 
\begin{align*}
    A_1 = ywz\inv x  \hspace{0.3in}  & A_{n+1} = B_nA_n\\
    B_1 = y^2wz\inv x  \hspace{0.25in} & B_{n+1} = B_nB_nA_n = B_nA_{n+1}
\end{align*}
We claim that 
\begin{equation}
    A_{n+1} = gA_n \quad \text{ and } \quad  B_{n+1} = gB_n.
\label{abstep}
\end{equation}
We proceed by induction.
The base case is established by a straightforward computation: 
\[
gA_1 = B_1A_1 = A_2
\quad \text{ and } \quad
gB_1 = B_1B_1A_1 = B_2.
\]
Now let \( n \in \bn \) and assume \( gA_{n-1}= A_n \) and \( gB_{n-1} = B_n \).
We then have that
\[
gA_n = g(B_{n-1})g(A_{n-1}) = B_nA_n = A_{n+1}
\]
and
\[
gB_n = g(B_{n-1})g(B_{n-1})g(A_{n-1}) = B_nB_nA_n = B_{n+1}.
\]

Given any word $A$ in $\{w,x,y,z\}$, let $\bar A$ denote the word obtained by eliminating the letters $w$ and $z$, and then reducing and cyclically reducing.
The images of \( x \) and \( y \) generate \( \pi_1(H, p ) \), and the word $\bar A$ determines a walk in $H_1(M_v, \zz)  \cong \zz^2$ as in Section \ref{s:algo}.
 
Let $w(\bar A_n)$ and \( w(\bar B_n) \) denote the width of these walks and \( h(\bar A_n) \) and \( h(\bar B_n)\) their height. 
We have  $$w(\bar A_1) = 1, h(\bar A_1) = 1, w(\bar B_1) = 1, h(\bar B_1)= 2$$ and it follows from the observation that \( A_n \) and \( B_n \) contain only positive powers of \( x \) and \( y \) that 
\begin{align*}
w(\bar A_{n+1}) &= w(\bar B_n) + w(\bar A_n) \\
h(\bar A_{n+1}) &= h(\bar B_n) + h(\bar A_n) \\
w(\bar B_{n+1}) &= w(\bar B_n) + w(\bar A_{n+1}) \\
h(\bar B_{n+1}) &=h(\bar B_n) + h(\bar A_{n+1}).
\end{align*}
Let $f_n$ denote the $n^{th}$ term of the Fibonacci sequence starting with $f_0 = f_1 = 1$. 
Another short induction argument yields
\begin{equation}\label{wh}
w(\bar A_n) = f_{2n-2}, \,\, h(\bar A_n) = f_{2n-1}, \,\, w(\bar B_n) = f_{2n-1}, \,\, \text{and} \,\, h(\bar B_n) = f_{2n}
\end{equation}

We claim that
\begin{equation}
    g_n([x,z]) = xA_1A_2\ldots A_nB_nB_{n-1}\ldots B_1yw(g_n(x^{-1}z^{-1}))
\end{equation}

This can easily be checked for $n=1$ and follows inductively from \eqref{abstep} and the fact that
\[
g(x) = xA_1 \quad \text{and} \quad g(yw) = B_1yw.
\]

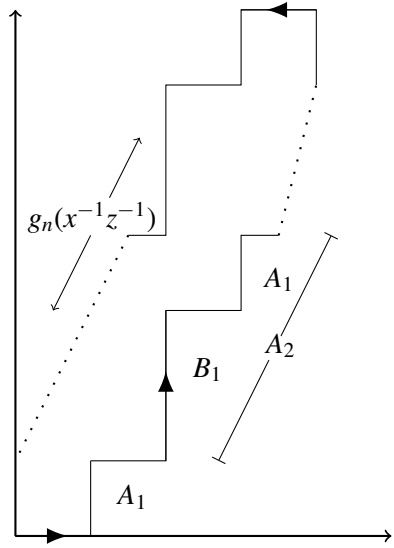
\begin{figure}
    \centering
    \begin{tikzpicture}
    \draw[thick,->] (0,0) -- (5,0); 
    \draw[thick,->] (0,0) -- (0,7); 
    \draw
           (1, 0) 
        -- (0, 0);
    \draw[postaction={decorate}]
    	   (0, 0) 
        -- (1, 0) 
        -- (1, 1) node[below right, xshift=0.2cm, yshift=-0.2cm] {$A_1$};
     \draw[postaction={decorate}]
           (1, 1)
        -- (2, 1)
        -- (2, 3) node[below right, xshift=0.2cm, yshift=-0.5cm] {$B_1$}
        -- (3, 3)
        -- (3, 4) node[below right, xshift=0.15cm, yshift=-0.3cm] {$A_1$}
        -- (3.5, 4);
    \draw[|-] 
           (2.7,1) 
        -- (3.37,2.34) node[above right, xshift=-0.2cm, yshift=-0.1cm] {$A_2$}; 
    \draw[-|] (3.53, 2.66) -- (4.2,4);
    \draw[thick, loosely dotted] 
           (3.5, 4) 
        -- (4, 6);
    \draw[postaction={decorate}]
           (4, 6) 
        -- (4, 7) 
        -- (3, 7) 
        -- (3, 6) 
        -- (2, 6) 
        -- (2, 4) 
        -- (1.5, 4);
    \draw[thick, loosely dotted] 
           (1.5, 4) 
        -- (0, 1);
    \draw[<-] 
           (0.5,3) 
        -- (1,4) node[above right, xshift=-1cm, yshift=-0.1cm] {$g_n(x^{-1}z^{-1})$}; 
    \draw[->] (1.25, 4.5) -- (1.65,5.3);

	\draw[decoration={markings,mark=at position 0.7 with {\arrow[scale=2]{latex}}}, postaction={decorate}] (0,0)--(1,0);
	\draw[decoration={markings,mark=at position 0.6 with {\arrow[scale=2]{latex}}}, postaction={decorate}] (2,1)--(2,3);
	\draw[decoration={markings,mark=at position 0.67 with {\arrow[scale=2]{latex}}}, postaction={decorate}] (4,7)--(3,7);

\end{tikzpicture}
    \caption{A general picture of $g_n([x,z])$, after removing the letters $z$ and $w$. The relator starts at the origin and climbs up the bottom ``staircase'' which is $g_n(xz)$. Then $g_n(x^{-1}z^{-1})$ travels back down.}
 \label{fig:oddk_ti}
    \end{figure}

Fix \( n \in \bn \) and define the list of points \( P_n \)  in $\zz^2$ as follows:
\begin{align*}
P_n = \{  & (1,0), (w(\bar A_1), h(\bar A_1)), \ldots, (w(\bar A_n), h(\bar A_n)), \\
		& (w(\bar B_n), h(\bar B_n)), \ldots, (w(\bar B_1), h(\bar B_1)), (0,1) \} \\
	= \{  & (1,0), (f_0,f_1), (f_2,f_3),\ldots, (f_{2n-2}, f_{2n-1}), \\
		& (f_{2n-1}, f_{2n}), (f_{2n-3}, f_{2n-1}),\ldots, (f_1, f_2), (0,1)\}.
\end{align*}
Note that \( |P_n| = 2n+2 \).
For \( k \in \{0, 1, \ldots, |P_n|\} \),  define the point \( p_k \) in $\zz^2$ by setting \( p_0 = (0,0) \) and
\[
   p_{k} = p_{k-1} + P_n(k) 
\]
for \( 0<k \leq |P_n|\), where \( P_n(k) \) denotes the \( k^{th} \) element in the list \( P_n \). 
Additionally, we define \( \ell_k \) to be the line segment connecting consecutive points, that is, for \( k \in \{1, \ldots, |P_n|\} \) we set
\[
   \ell_k = \overline{p_{k-1}p_k}.
\]

It follows from the construction that for each \( k \in \{0, \ldots, |P_n|\} \) the point \( p_k \) lies on the walk determined by
\[
g_n(xz) = xA_1A_2\ldots A_nB_nB_{n-1}\ldots B_1yw.
\]
Moreover, for \( k \in \{1, \ldots, n+1 \} \), the point \( p_{k+1} \) is the endpoint of the walk determined by \( xA_1\ldots A_k \).
Similarly, for \( k \in \{n+2, \ldots, 2n+1\} \), the point \( p_{k+1} \) is the end point of the walk determined by \( xA_1\ldots A_nB_n \ldots B_{2n-k+2} \). 
We now want to prove that the walk determined by \( g_n(xz) \) is bounded on the right by \( \bigcup_{k} \ell_k \).

We again proceed by induction on both sequences:
As our base case we observe that the vertices in the walk determined by \( \bar A_1 \) are \( (0,0), (0,1) \) and \( (1,1) \); hence the walk is bounded to the right by the line connecting the endpoints of the walk.
Similarly one checks this for \( \bar B_1 \). 
Now the walk determined by \( \bar A_k \) is the concatenation of the walk determined by \( \bar B_{k-1} \) followed by that of \( \bar A_{k-1} \).
Now it follows from \eqref{wh} that the slope of the line connecting the endpoints of the walk corresponding to \( \bar B_{k-1} \) is \( \frac{f_{2k-2}}{f_{2k-3}} \) and the slope connecting the endpoints of the walk given by \( \bar A_k \) is \( \frac{f_{2k-1}}{f_{2k-2}} \).
Standard properties of the Fibonacci sequence guarantee the latter is always smaller.
If we now assume that all the vertices in the walks \( \bar B_{k-1} \) and \( \bar A_{k-1} \) are bounded to the right by the line segments connecting their respective endpoints, then the same is true for the walk given by \( \bar A_k \).
A similar argument can be made for \( \bar B_k \). 
As the walk given by \( g_n(xz) \) is a concatenation of the walks given by the \( \bar A_k \) and \( \bar B_k \)'s, we see that \( \bigcup_k \ell_k \) bounds this walk on the right.

Let \( \Theta_n \co \rr^2 \to \rr^2 \) be defined by \( \Theta_n(v) = -v + p_{2n+2} \).
We now claim that the convex hull \( \mathcal{C}_n \) of the walk given by \( g_n([x,z]) \) has vertex set
\[
V_n = \{p_k : 0\leq k \leq 2n+2 \} \cup \{\Theta_n(p_k): 0< k < 2n+2 \}
\]
and edge set
\[
E_n = \{\ell_k : 1\leq k \leq 2n+2\} \cup \{\Theta_n(\ell_k): 1\leq k \leq 2n+2 \}.
\]
Let \( \hat \ell_k \) be the complete line containing the segment \( \ell_k \).
Again using basic properties of the Fibonacci sequence, the slope of \( \hat \ell_k \) is strictly greater than that of \( \hat\ell_{k+1} \) implying that each \( \hat\ell_k \) bounds \( \mathcal{C}_n \) from the right.
Now combining this with the fact that \( \ell_k \) is contained in \( \mathcal{C}_n \), we must have that \( \ell_k \) is an edge of \( \mathcal{C}_n \).
As the walk is invariant under the transformation \( \Theta_n \), we see that \( \Theta_n(\ell_k) \) is also an edge of \( \mathcal{C}_n \).
Finally, the pointwise union of the elements of \( E_n \) bounds a convex polygon and hence \( E_n \) contains all the edges of \( \mathcal{C}_n \).

\begin{figure}[t]
    \centering
    \begin{tikzpicture}[scale = .8]
  
    \draw[thick,->] (0,0) -- (6,0); 
    \draw[thick,->] (0,0) -- (0,10.3); 

    \draw[gray, thick, loosely dotted] 
           (1, 0) 
        -- (0, 0) 
        -- (1, 0) 
        -- (1, 1) 
        -- (2, 1)
        -- (2, 3) 
        -- (3, 3)
        -- (3, 4) 
        -- (4, 4);
    \draw[gray, thick, loosely dotted] 
           (5, 9.3) 
        -- (5, 10.3) 
        -- (4, 10.3) 
        -- (4, 9.3) 
        -- (3, 9.3) 
        -- (3, 7.3)
        -- (2, 7.3)
        -- (2, 6.3)
        -- (1, 6.3);

    \draw
           (0, 1) node[left] {\footnotesize $\Theta(p_{2n+2})$} node {\color{blue}\textbullet}
        -- node [left] {\footnotesize $\Theta(l_{2n+2})$} (0, 0) node[below left] {\footnotesize $p_{0}$} node {\color{blue}\textbullet}
        -- node[below] {\footnotesize $l_1$} (1, 0) node[below right] {\footnotesize $p_1$} node {\color{blue}\textbullet}
        -- node[below right] {\footnotesize $l_2$} (2, 1) node[below right] {\footnotesize $p_2$} node {\color{blue}\textbullet}
        -- node[below right] {\footnotesize $l_3$} (4, 4) node[below right] {\footnotesize $p_3$} node {\color{blue}\textbullet};
    \draw[thick, loosely dotted] 
           (4, 4) 
        -- (5, 9.3);
    \draw
           (5, 9.3) node[below right] {\footnotesize $p_{2n+1}$} node {\color{blue}\textbullet}
        -- node[right] {\footnotesize $l_{2n+2}$} (5, 10.3) node[above right] {\footnotesize $p_{2n+2}$} node {\color{blue}\textbullet}
        -- node[above] {\footnotesize $\Theta(l_1)$} (4, 10.3) node[above left] {\footnotesize $\Theta(p_1)$} node {\color{blue}\textbullet}
        -- node[left] {\footnotesize $\Theta(l_2)$} (3, 9.3) node[left] {\footnotesize $\Theta(p_2)$} node {\color{blue}\textbullet}
        -- node[above left] {\footnotesize $\Theta(l_3)$} (1, 6.3) node[above left] {\footnotesize $\Theta(p_3)$} node {\color{blue}\textbullet};

    \draw[thick, loosely dotted] 
           (1, 6.3) 
        -- (0, 1);

    \draw[red]
           (0.7, 2)
        -- (0.5, 0.5) node[below left] {}
        -- (1.5, 1.5) node[below right] {}
        -- (3.5, 4.5) node[below right] {}
        -- (3.75, 6);
    \draw[red, thick, loosely dotted]
           (3.75, 6)
        -- (4.4, 9.3);
    \draw[red]
           (4.4, 9.3)
        -- (4.5, 9.8) node[above right] {}
        -- (3.5, 8.8) node[above left] {}
        -- (1.5, 5.8) node[above left] {}
        -- (1.4, 5.3);
    \draw[red, thick, loosely dotted]
           (1.4, 5.3)
        -- (0.66, 2);

    \draw[gray, decoration={markings,mark=at position 1 with {\arrow[scale=2]{>}}},postaction={decorate}] (0,0) -- (0.5,0.5);
    \draw[gray, decoration={markings,mark=at position 1 with {\arrow[scale=2]{>}}},postaction={decorate}] (0,1) -- (0.5,0.5);
    \draw[gray, decoration={markings,mark=at position 1 with {\arrow[scale=2]{>}}},postaction={decorate}] (1,0) -- (0.5,0.5);
    \draw[gray, decoration={markings,mark=at position 1 with {\arrow[scale=2]{>}}},postaction={decorate}] (2,1) -- (1.5,1.5);
    \draw[gray, decoration={markings,mark=at position 1 with {\arrow[scale=2]{>}}},postaction={decorate}] (4,4) -- (3.5,4.5);
    \draw[gray, decoration={markings,mark=at position 1 with {\arrow[scale=2]{>}}},postaction={decorate}] (5,10.3) -- (4.5,9.8);
    \draw[gray, decoration={markings,mark=at position 1 with {\arrow[scale=2]{>}}},postaction={decorate}] (5,10.3) -- (4.5,9.8);
    \draw[gray, decoration={markings,mark=at position 1 with {\arrow[scale=2]{>}}},postaction={decorate}] (4,10.3) -- (4.5,9.8);
    \draw[gray, decoration={markings,mark=at position 1 with {\arrow[scale=2]{>}}},postaction={decorate}] (3,9.3) -- (3.5,8.8);
    \draw[gray, decoration={markings,mark=at position 1 with {\arrow[scale=2]{>}}},postaction={decorate}] (1,6.3) -- (1.5,5.8);

\end{tikzpicture}
    \caption{The marked polytope $\cM_{M_{g_n}}$ with $4n$ vertices.}
 \label{fig:4ivertices}
\end{figure}
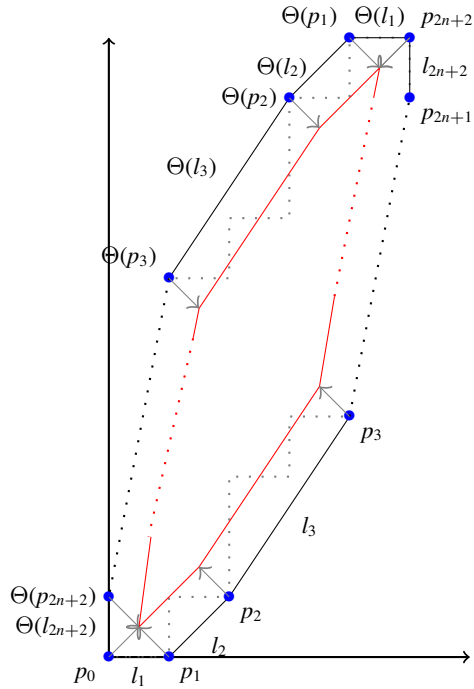

To finish, we note that \( \mathcal{C}_n \) has \( 4n+4 \) vertices and applying the remaining steps of the algorithm presented in Section \ref{s:algo} results in the polygon \( \mathcal{M}_{M_{g_n}} \) with \( 4n \) vertices.
In particular, the two collections of vertices \( \{\Theta_n(p_{2n+1}), p_0, p_1 \} \) and \( \{p_{2n+1}, p_{2n+2}, \Theta_n(p_1)\} \) each collapse to a single vertex in the process of obtaining \( \mathcal{M}_{M_{g_n}} \) from \( \mathcal{C}_n \) (see Figure \ref{fig:4ivertices}). It is easy to see that two vertices $v$ and $v'$ of the convex hull collapse to a single vertex in $\cM_\pi$ only if they lie on the same square, so no other vertices of the convex hull collapse. 
\end{proof}

Similar arguments give the following theorem; we only sketch the proof.
\begin{Theorem}
\label{lem:oddk}
For each $n\in\mathbb{N}$ let $h_n = T_b^{-2}(T_d^{-1}T_c)^{n+1}$.
The polygon $\mathcal{M}_{M_{h_n}}$ has $4n+2$ marked vertices. 
\end{Theorem}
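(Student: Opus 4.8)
The plan is to run the proof of Theorem~\ref{lem:evenk} essentially verbatim, with $T_b\inv$ replaced by $T_b^{-2}$, and to keep track of how the combinatorics changes. Since $T_c$ and $T_d$ fix $z$, one immediately has $h_n(z) = T_b^{-2}(z) = x^{-2}z$. Putting $h = T_b^{-2}T_d\inv T_cT_b^{2}$, so that $h_{n+1} = h\cdot h_n$, I would define words $A_n', B_n'$ by the \emph{same} recursion $A_{n+1}' = B_n'A_n'$, $B_{n+1}' = B_n'B_n'A_n'$ as in that proof, but starting from the modified base cases $A_1' = ywz\inv x^2$, $B_1' = y^2wz\inv x^2$ (equivalently, $A_n' = T_b\inv(A_n)$ and $B_n' = T_b\inv(B_n)$, where $A_n, B_n$ are the words from the proof of Theorem~\ref{lem:evenk}). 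A computation with Lemma~\ref{t:twistformulas} of the same shape as the one given there yields $h(x) = xA_1'$, $h(yw) = B_1'yw$, $h(A_1') = A_2'$ and $h(B_1') = B_2'$, and the same induction gives $h(A_n') = A_{n+1}'$, $h(B_n') = B_{n+1}'$, hence $h_n(x) = xA_1'A_2'\cdots A_{n+1}'$.

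Deleting the letters $z$ and $w$ and reducing, $\overline{h_n(z)} = x^{-2}$, so the (reduced and cyclically reduced) relator of $M_{h_n}$ is
\[
r_n \;=\; \overline{h_n(x)}\,x^{-2}\,\overline{h_n(x)}\inv\,x^{2} \;=\; [\,V_n',\,x^{-2}\,], \qquad V_n' := \overline{h_n(x)} = x\bar A_1'\cdots\bar A_{n+1}',
\]
the commutator of the positive word $V_n'$ in $x,y$ with $x^{-2}$. Just as in Theorem~\ref{lem:evenk}, after reduction the ascending half of $r_n$ is $x\bar A_1'\cdots\bar A_n'\bar B_n'\cdots\bar B_1'y$, and $r_n$ has the palindromic form $u\sigma(u)$, where $\sigma$ is the substitution $x\mapsto x\inv,\ y\mapsto y\inv$; the single new feature — forced by the exponent $-2$ in place of $-1$ — is that $u$ is now the ascending half followed by one further letter $x\inv$ (equivalently, the descending half of $r_n$ ends in $x^2$ rather than in $x$).

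Next I would identify the convex hull $\mathcal C_n$ of the walk of $r_n$. From $w(\bar A_1')=2,\ h(\bar A_1')=1,\ w(\bar B_1')=2,\ h(\bar B_1')=2$, the same width/height recursions give $w(\bar A_n')=2f_{2n-2}$, $h(\bar A_n')=f_{2n-1}$, $w(\bar B_n')=2f_{2n-1}$, $h(\bar B_n')=f_{2n}$, and the same Fibonacci-sequence properties show that the slopes of the consecutive chunk-segments $\ell_1,\ldots,\ell_{2n+2}$ along the ascending half are strictly monotone. Setting $\Theta_n(v) = -v + e_n$, where $e_n$ is the endpoint of $u$ (i.e.\ the endpoint of the ascending half shifted by $(-1,0)$), the walk is $\Theta_n$-invariant and, exactly as before, the vertex set of $\mathcal C_n$ is the set of chunk-endpoints $p_1,\ldots,p_{2n+2}$ together with their $\Theta_n$-images, so $\mathcal C_n$ has $4n+4$ vertices. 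The essentially new point is that $p_0 = (0,0)$ is \emph{not} a vertex of $\mathcal C_n$: because of the extra $x\inv$ in $u$, the bottom edge of $\mathcal C_n$ runs from $\Theta_n(p_{2n+2}) = (-1,0)$ through $(0,0)$ to $p_1 = (1,0)$, and, symmetrically, its top edge runs from $p_{2n+2}$ through $e_n$ to $\Theta_n(p_1)$ — each of these edges having length $2$.

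Finally I would run the remaining steps of the algorithm of Section~\ref{s:algo}. In Theorem~\ref{lem:evenk} the two triple collapses $\{\Theta_n(p_{2n+1}),p_0,p_1\}$ and $\{p_{2n+1},p_{2n+2},\Theta_n(p_1)\}$ each cost two vertices. Here the length-$2$ top and bottom edges turn these into two \emph{double} collapses, namely $\{p_{2n+1},p_{2n+2}\}$ and $\{\Theta_n(p_{2n+1}),\Theta_n(p_{2n+2})\} = \{(-1,1),(-1,0)\}$: in each case the two vertices are the corners of a unit square contained in $\mathcal C_n$ (the square immediately to the left of the right edge, respectively the square $[-1,0]\times[0,1]$), whose other two corners — $e_n$ and $p_{2n+2}-(1,1)$, respectively $(0,0)$ and $(0,1)$ — lie on the walk but are not vertices of $\mathcal C_n$. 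As in the even case, two vertices of $\mathcal C_n$ collapse to one only if they lie on a common unit square, so no other vertices merge; and the walk passes through every vertex of $\mathcal C_n$ exactly once, so every vertex of $\mathcal{M}_{M_{h_n}}$ is marked. Hence $\mathcal{M}_{M_{h_n}}$ has $(4n+4)-2 = 4n+2$ marked vertices. The part I expect to require genuine care is this last step — verifying that passing from exponent $-1$ to exponent $-2$ really does replace each triple collapse by a double collapse: that the overshoot at each end of the walk creates exactly one additional collinear lattice point, that the two unit squares above actually lie inside $\mathcal C_n$, and that $(0,0)$ and $e_n$ drop out of the vertex set while no new pair of vertices collapses. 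Everything preceding it is a direct transcription of the proof of Theorem~\ref{lem:evenk}.
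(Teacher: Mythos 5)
Your proposal is correct and follows essentially the same route as the paper: the paper's own proof of Theorem~\ref{lem:oddk} is a short sketch that reuses the machinery of Theorem~\ref{lem:evenk}, noting that precomposing with the extra $T_b\inv$ doubles the widths (Fibonacci data starting at $2$) and that the same argument yields a convex hull with $4n+4$ vertices and hence a polygon with $4n+2$ vertices. Your write-up simply makes explicit the point the paper leaves implicit --- that the extra $x\inv$ at each end turns the two triple collapses of the even case into two double collapses (with $(0,0)$ and $e_n$ becoming mid-edge points rather than hull vertices) --- and your verification of this, together with the markedness claim, is consistent with the paper's intended argument.
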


\begin{proof}
With $g_n, A_k, B_k$ as in the proof of Theorem ~\ref{lem:evenk}, the composition of $T_b\inv$ with $g_n$ simply increases the width of $A_1,B_1$ by $1$. Therefore, the Fibonacci sequences start at $f_0 = f_1 = 2$ and thus doubles the width of $A_k, B_k$.  Exactly the same argument as above applies to obtain a convex hull of $4n+4$ vertices, which yields a polygon of $4n+2$ vertices.
\end{proof}

\begin{Remark}
We do not know if the $3$-manifolds that we construct have any alternative descriptions. For example, it would be interesting if these manifolds were all link complements that had a uniform construction. In \cite{L06}, Licata used Heegaard Floer homology to compute $\cM_\pi$ for all two-component, four-strand pretzel links, and the unit balls are shaped similarly to the polytopes we construct. On the other hand, all of Licata's examples have 8 vertices, so cannot match our examples. 
\end{Remark}

\section{Appendix: Baumslag-Solitar fundamental groups}\label{s:appendix}

We study in detail a specific collection of $3$-manifolds that were helpful for us in understanding the Thurston norm of tunnel number one manifolds. 
In particular, we look at 3-manifolds whose fundamental group is isomorphic to a Baumslag-Solitar group of the form \( \mathrm{BS(m,m)} \) for some positive integer \( m \), that is, 
\[
\pi_1 = \langle x,y\, | \, xy^{m}x^{-1}y^{-m}\rangle.
\]
For such a manifold, the Thurston norm assigns 0 to the cohomology class dual to $x \in H_1(M)$, and $m-1$ to the class dual to $y$.
This can be seen using the algorithm presented in Section \ref{s:algo}.
 
In the notation of Section \ref{s:construction}, if for a positive integer \( m \) we define \( f_m \in \Mod(\partial H) \) to be
\[
f_m = T_d^mT_c\inv T_d T_c,
\]
then \( \pi_1(M_{f_m}) = \mathrm{BS}(m,m) \). 

For the purpose of this appendix, we give another useful construction of the manifold \( M_{f_m} \):
Let \( M_1 = \mathbb{S}^1 \times \mathbb{D} \) be a solid torus, \( (p,q) \in \partial M_1 \), \( \lambda = \mathbb{S}^1 \times \{q\} \), and \( \mu = \{p\} \times \partial \mathbb{D} \). 
Let \( \gamma\) be a curve in \( \partial M_1 \) representing the homology class \( m[\lambda] + [\mu] \) in \( H_1(\partial M_1) \) and let \( \gamma_0 \) and \( \gamma_1\) be distinct parallel copies of \( \gamma \), that is, \( \gamma, \gamma_0, \) and \( \gamma_1 \) are pairwise disjoint and homologous.
Now, let \( M_2 = \mathbb{S}^1 \times [0,1]\times [0,1] \).

Let \( \nu(\gamma_0) \) and \( \nu(\gamma_1) \) be disjoint regular neighborhoods of \( \gamma_0 \) and \( \gamma_1 \), respectively.
Let \( \gamma_i^+ \) and \( \gamma_i^- \) denote the boundary components of \( \nu(\gamma_i) \) labelled such that \( \gamma_0 \) and \( \gamma_1 \) are contained in distinct components of \( \partial M_1 \ssm ( \gamma_0^+ \cup \gamma_1^+) \).
Let \( N \) be the manifold obtained by identifying \( \nu(\gamma_i) \) in \( \partial M_1 \) with the annulus \( \mathbb{S}^1 \times [0,1] \times \{i\} \subset \partial M_2 \) such that 
\begin{itemize}
\item \( \gamma_i^+ \) is identified with \( \mathbb{S}^1 \times \{1-i\} \times \{i\} \) and
\item \( \gamma_i^- \) is identified with \( \mathbb{S}^1 \times \{i\} \times \{i\} \).
\end{itemize}

Up to homotopy, we may assume that \( \gamma \) is contained in the annulus \( A \) bounded by \( \gamma_0^+ \) and \( \gamma_1^- \) in \( \partial M_1 \).
Pick a point \( t \in \mathbb{S}^1 \) and let \( \delta \) be the loop obtained by closing up the segment \( \{t\} \times \{1\} \times [0,1] \) in \(\partial M_2 \) with an arc contained in \( A \) and intersecting \( \gamma \) once. 
We can then see that \( \gamma \) and \( \delta \) are contained a torus component of \( \partial N \) and hence commute as elements of the fundamental group.
Furthermore, an application of Van Kampen's theorem shows that the curves \( \lambda \) and \( \delta \) generate \( \pi_1(N) \) and yields the presentation \( \pi_1(N) = \langle \delta, \lambda \, | \, \delta\lambda^m\delta\inv\lambda^{-m} \rangle \).
This construction is shown in Figure \ref{bs-group}.

\begin{figure}
\centering
\includegraphics[trim={0.85cm 15.55cm 6.4cm 1.5cm}, clip, scale=.65]{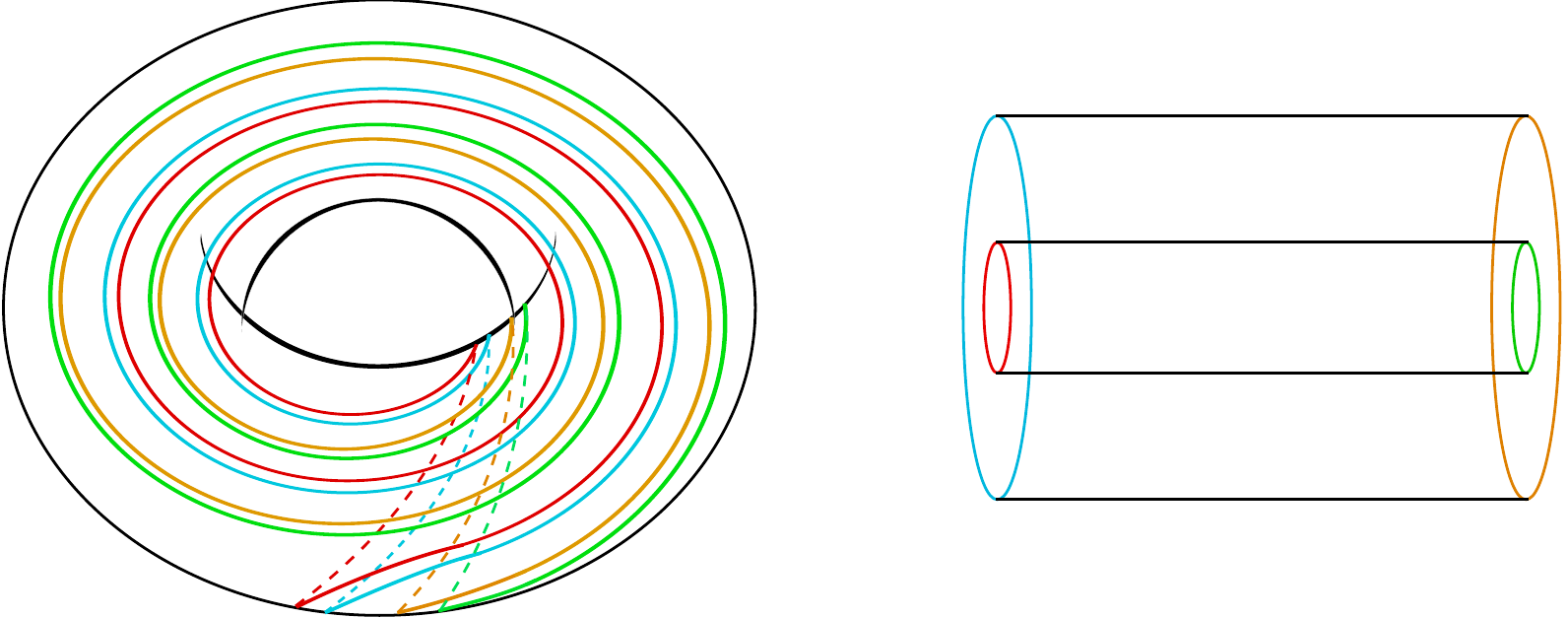}
\caption{Shown here is the manifold \( M_{f_m}\cong N = M_1 \cup M_2 \) with \(  m = 4 \). 
The manifold \( M_1 \) is drawn on the left and \( M _2 \) on the right.  Curves of the same color are identified in \( M_{f_m} \).
}
\label{bs-group}
\end{figure}

Now, it is easy to see that \( \mathrm{BS}(m,m) \) has an infinite and normal cyclic subgroup (e.g. take the group generated by \( \lambda^m \) in \( \pi_1(N) \)).
The Seifert fibered space theorem (see \cite{CJ94, G92}) implies that both \( M_{f_m} \) and \( N \) are Seifert fibered. 
In \cite[Theorem 3.1]{scott}, Scott showed that if two Seifert fibered spaces have infinite and isomorphic fundamental groups, then they are in fact homeomorphic; hence, \( N \) and \( M_{f_m} \) are homeomorphic.

Using this new description of \( M_{f_m} \) we can find the minimal dual surfaces to \( \lambda \) and \( \delta \).
By applying the algorithm in Section \ref{s:algo}, we know there exists a surface of Euler characteristic 0 dual to \( \delta \), e.g. the surface \( \mathbb{S}^1 \times [0,1] \times \{\frac 12\} \subset M_2 \).

\begin{figure}[t]
\centering
	\includegraphics[scale=0.37, trim={0.8cm 5.15cm 0.5cm 3cm}, clip]{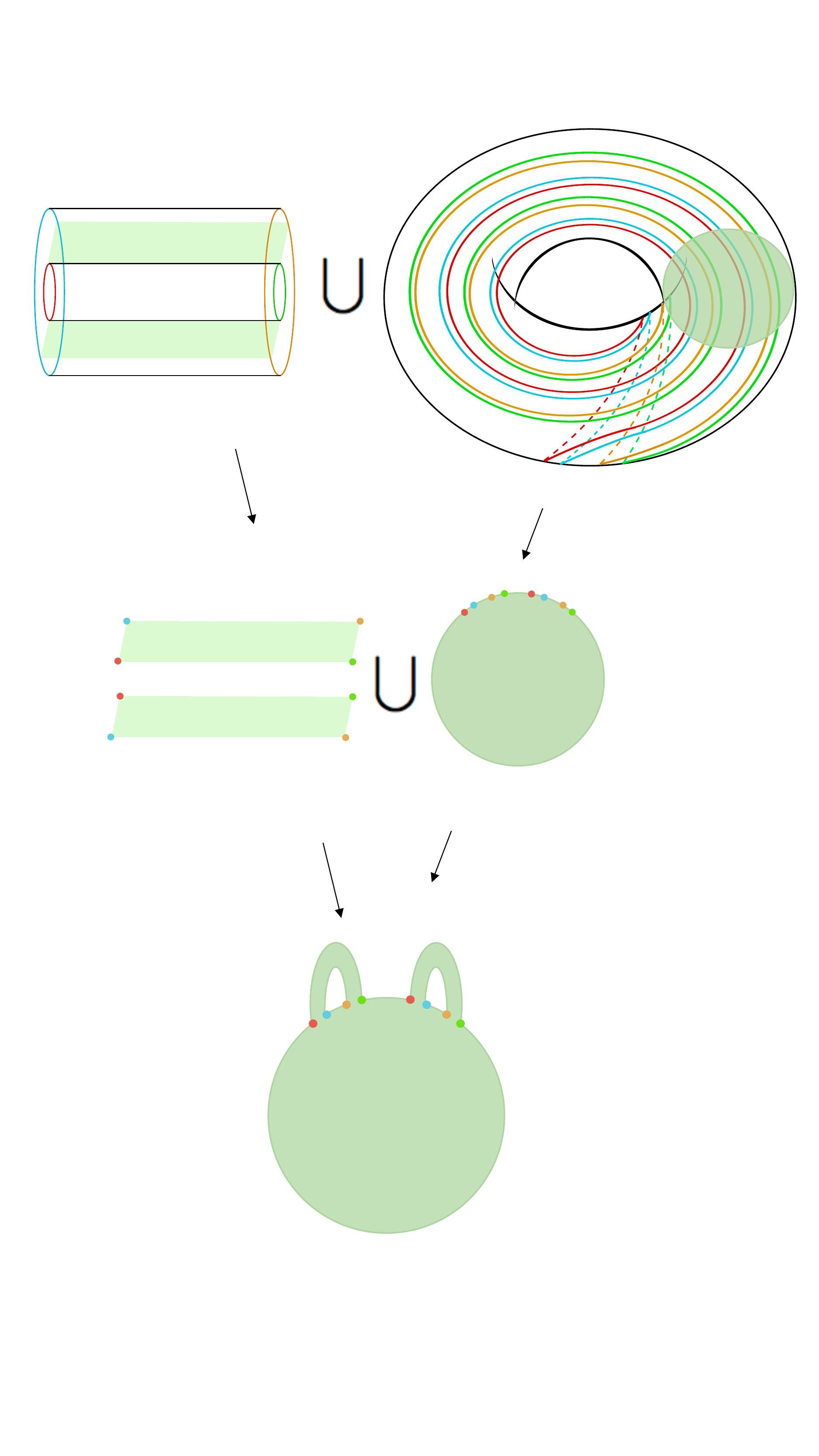}
\caption{The dual disk to \( \lambda \) in the solid torus, and its extension to a dual surface in \( N \).
}
\label{y-dual-pre}
\end{figure}

Let us now focus on \( \lambda \).
In \( M_2 \), \( \lambda \) is dual to the disk \( D \) bounded by \( \mu \); however, \( \mu \) does not live in the boundary of \( M \); we will fix this with a surgery.
For \( i \in \{0,1\} \), the intersection \( \nu(\gamma_i) \cap \mu \) can be written as a disjoint union of \( m \) intervals, call them \( I_1^i, \ldots, I_m^i \) (recall that \( |\gamma_0 \cap \mu| = m \) by construction).
The intervals \( I_k^0 \) and \( I_k^1 \) bound a rectangle \( R_k \) in \( M_2 \) of the form \( I_k^0 \times [0,1] \) for each \( k \in \{1, \ldots, m\} \). 
Now the surface
\[
\Sigma = D \cup R_1 \cup \cdots \cup R_m
\]
is obtained from a disk by attaching \( m \) rectangular strips, so it is a genus-0 surface with \( m \) boundary components.
In particular, \( \chi(\Sigma) = 1-m \).
Furthermore, \( \Sigma \) is dual to \( \lambda \).
The algorithm guarantees this dual surface is minimal.

\begin{bibdiv}
	\begin{biblist}

	\bib{CJ94}{article}{
    		author = {Casson, Andrew}
		author ={Jungreis, Douglas},
     		title = {Convergence groups and {S}eifert fibered {$3$}-manifolds},
   		journal = {Invent. Math.},
 	    	volume = {118},
      		date = {1994},
    		number = {3},
     		pages = {441--456},
		
		}	
	
	\bib{dt}{article}{ 
		author={Dunfield, Nathan}, 
		author = {Thurston, Dylan}
		title={A random tunnel number one 3-manifold does not fiber over the circle}, 
		journal = {Geometry \& Topology},
		date = {2006},
		pages = {2431-2499},
		volume = {10},
	
		}
		
	\bib{fm}{book}{ 
		author={Farb, Benson}, 
		author = {Margalit, Dan},
		title={Primer on Mapping Class Groups}, 
		publisher = {Princeton University Press},
		date = {2012},
	
		}
		
	\bib{ft}{article}{ 
		author={Friedl, Stefan}, 
		author = {Tillman, Stephen},
		title={Two-generator one-relator groups and marked polytopes}, 
		note = {preprint}
	
		}

	\bib{fst}{article}{ 
		author={Friedl, Stefan}, 
		author = {Schreve, Kevin}, 
		author = {Tillman, Stephen},
		title={Thurston Norm and Fox Calculus}, 
		journal = {Geometry \& Topology},
		date = {2017},
		pages = {3759-3784},
		volume = {21},
	
		}
		
		\bib{G92}{article}{
   		author={Gabai, David},
   		title={Convergence groups are Fuchsian groups},
   		journal={Ann. of Math. (2)},
   		volume={136},
   		date={1992},
   		number={3},
   		pages={447--510},
		
		}

		\bib{L06}{article}{ 
		author={Licata, Joan}
		title={The Thurston polytope for four-stranded pretzel links}, 
		journal = {Algebr. Geom. Topol. },
		date = {2008},
		pages = {211-243},
		volume = {8},
	
		}
		
			\bib{scott}{article}{ 
		author={Scott, Peter}
		title={There are no fake Seifert Fibre Spaces with Infinite $\pi_1$}, 
		journal = {Annals of Mathematics},
		date = {1983},
		pages = {35-70},
		volume = {170},
	
		}
		
		\bib{Th86}{article}{ 
		author={Thurston, William}
		title={A norm for the homology of 3--manifolds}, 
		journal = {Mem. Amer. Math. Soc.},
		date = {1986},
		pages = {99--130},
		volume = {59},
	
		}

			\end{biblist}
\end{bibdiv}

\end{document}